\newtheorem{theorem}{Theorem}[section]%
\newtheorem{lemma}[theorem]{Lemma}%
\newtheorem{conjecture}[theorem]{Conjecture}
\newcommand{\abs}[1]{\lvert#1\rvert}
\newenvironment{proof}[1][Proof]{\noindent\textit{#1: } }{\hfill\rule{1mm}{2mm}}
\makeatletter \@addtoreset{equation}{section} \makeatother
\begin{document}

  \title{A note on directed 4-cycles in digraphs
 \thanks{Supported by the Key Project of Chinese Ministry of Education (109140) and NNSF of China (No. 11071233).}
  }
 \author{Hao Liang\thanks{Corresponding author: lianghao@mail.ustc.edu.cn}\,\,
 \\
  {\small Department of Mathematics}\\
  {\small Southwestern University of Finance and Economics}\\
  {\small Chengdu {\rm 611130}, China}\\
  \\
  Jun-Ming Xu \\
  {\small School of Mathematical Sciences}\\
  {\small University of Science and Technology of China}\\
 {\small Wentsun Wu Key Laboratory of CAS}\\
  {\small Hefei {\rm 230026}, China}\\
 }
\date{}

\maketitle {\centerline{\bf\sc Abstract}\vskip 8pt  Using some
combinatorial techniques, in this note, it is proved that if
$\alpha\geq 0.28866$, then any digraph on $n$ vertices with
minimum outdegree at least $\alpha n$ contains a directed cycle of
length at most 4.

\vskip6pt\noindent{\bf Keywords}: Digraph, Directed cycle

\noindent{\bf AMS Subject Classification: }\ 05C20, 05C38

\section{Introduction}

Let $G=(V,E)$ be a digragh without loops or parallel edges, where $V=V(G)$
is the vertex-set and $E=E(G)$ is the arc-set. In 1978, Caccetta and
H\"{a}ggkvist~\cite{ch78} made the following conjecture:

\begin{conjecture}
Any digraph on n vertices with minimum outdegree at least r contains
a directed cycle of length at most $\lceil n/r\rceil$.
\end{conjecture}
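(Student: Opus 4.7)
The plan is induction on $n$ combined with an extremal analysis of iterated out-neighborhoods. Let $G$ be a digraph on $n$ vertices with $\delta^+(G) \geq r$, and set $k = \lceil n/r \rceil$. I argue by contradiction, assuming $G$ contains no directed cycle of length at most $k$, and try to force a vertex count exceeding $n$.

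First I examine the BFS layers from an arbitrary vertex $v$. Let $N^+_i(v)$ denote the set of vertices at directed distance exactly $i$ from $v$, and put $B_{k-1}(v) = \bigcup_{i=0}^{k-1} N^+_i(v)$. Since $G$ has no directed cycle of length at most $k$, the layers $N^+_0(v), \ldots, N^+_{k-1}(v)$ are pairwise disjoint: an arc $u \to w$ with $u \in N^+_i(v)$ and $w \in N^+_j(v)$ for $j \leq i$ would close a cycle through $v$ of length $i - j + 1 \leq k$. Combined with $\delta^+(G)\geq r$, this yields recursive lower bounds on $|N^+_i(v)|$ and hence on $|B_{k-1}(v)|$.

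Second, I double count. Summing $|B_{k-1}(v)|$ over $v$ (or, equivalently, counting length-$(k{-}1)$ paths from each vertex) gives a lower bound via the out-degree condition, while the trivial bound $|B_{k-1}(v)| \leq n$ gives an upper bound. Pushing these against each other should force $n \geq rk$; together with $k=\lceil n/r\rceil$ this pins down equality everywhere and so constrains $G$ to a very rigid structure (essentially a circulant blow-up of $\mathbb{Z}_k$), from which a cycle of length $k$ is extracted directly.

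The main obstacle is the second step: the elementary layer bound $|N^+_i(v)|\geq r$ only gives $|B_{k-1}(v)|\geq 1 + (k-1)r$, falling one short of the $kr$ one actually needs, because out-arcs inside a single layer $N^+_i(v)$ are not forbidden by the hypothesis and are difficult to control. Closing precisely this gap is what has kept the Caccetta--H\"aggkvist conjecture open since 1978: all known partial results (Chv\'atal--Szemer\'edi, Bondy, Shen, Hamidoune, Nishimura, and, for the $k=4$ case relevant here, the $0.28866\,n$ bound refined in this paper) arise from increasingly clever refinements of this counting. I expect my plan, honestly executed, to reproduce one of these partial thresholds rather than resolve the full conjecture, and the real content of the paper will presumably be a sharper combinatorial lemma that shaves the coefficient for $k=4$ down to $0.28866$.
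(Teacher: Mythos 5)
There is a genuine gap --- in fact, the proposal is not a proof, and you essentially concede this yourself in your final paragraph. The statement you were asked to prove is the Caccetta--H\"aggkvist conjecture, which is \emph{open}; the paper states it only as a conjecture and never claims a proof of it. What the paper actually proves is the much weaker Theorem~1.2: for cycles of length at most $4$ only, and under the stronger hypothesis $\delta^+\geq \alpha n$ with $\alpha\geq 0.28866$ (rather than $\alpha = 1/4$), via an induction that extracts a low-outdegree vertex inside $N^+(u)\cap N^+(v)$ and $N^+(v)\cup N^+(w)$ and then double counts transitive triangles. Your plan, even ``honestly executed,'' cannot close the conjecture, and your own closing paragraph says as much; a proposal that predicts its own failure is a report on the problem's status, not a proof attempt that succeeds.

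Beyond the global issue, your first step contains a concrete error. You claim that if $u\in N^+_i(v)$, $w\in N^+_j(v)$ with $j\leq i$, then the arc $u\to w$ closes a cycle of length $i-j+1$ through $v$. This is false: a cycle would require a directed path from $w$ back to $u$ (or back to $v$), and membership of $w$ in the $j$-th distance layer gives no such path unless $w$ happens to lie on the particular $v$--$u$ geodesic you chose. The simplest counterexample is two out-neighbors $a,b$ of $v$ joined by an arc $a\to b$: here $i=j=1$ and your claim would produce a ``cycle of length $1$.'' Only arcs returning to $v$ itself (the layer $N^+_0$) are genuinely forbidden by the no-short-cycle hypothesis. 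Consequently intra-layer and backward arcs are uncontrolled, the layer sizes need not be at least $r$, and even your fallback bound $\abs{B_{k-1}(v)}\geq 1+(k-1)r$ does not follow from the stated argument; the subsequent ``rigidity/circulant blow-up'' step is never reached. This uncontrollability of arcs among out-neighbors is precisely the obstruction the paper circumvents (for the length-$4$, $\alpha\approx 0.28866$ case only) by inductively finding a vertex of small outdegree \emph{inside} such neighborhoods and counting the pairwise-disjoint sets $N^+(v)$, $N^+(w)\setminus N^+(v)$, $N^+(x)\setminus(N^+(v)\cup N^+(w))$, $N^-(v)$, $N^-(u)\setminus N^-(v)$.
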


Trivially, this conjecture is true for $r=1$, and it has been proved
for $r=2$ by Caccetta and H\"aggkvist~\cite{ch78}, $r=3$ by
Hamildoune~\cite{h87}, $r=4$ and $r=5$ by Ho\'ang and
Reed~\cite{hr87}, $r<\sqrt{n/2}$ by Shen~\cite{sj00}. While the
general conjecture is still open, some weaker statements have been
obtained. A summary of results and problems related to the
Caccetta-H\"aggkvist conjecture sees Sullivan~\cite{s00}.

For the conjecture, the case $r=n/2$ is trivial, the case $r=n/3$
has received much attention, but this special case is still open. To
prove the conjecture, one may seek as small a constant $\alpha$ as
possible such that any digraph on $n$ vertices with minimum
outdegree at least $\alpha n$ contains a directed triangle. The
conjecture is that $\alpha=1/3$. Caccetta and
H\"{a}ggkvist~\cite{ch78} obtained $\alpha\leq (3-\sqrt{5})/2\approx
0.3819$, Bondy~\cite{b97} showed $\alpha\leq (2\sqrt{6}-3)/5\approx
0.3797$, Shen~\cite{s98} gave $\alpha\leq 3-\sqrt{7}\approx 0.3542$,
Hamburger, Haxell, and Kostochka~\cite{hhk07} further improved this
bound to $0.35312$ by using a result of Chudnovsky, Seymour and Sullivan~\cite{css08}. Namely, any digraph on $n$ vertices with
minimum outdegree at least $0.35312n$ contains a directed triangle.

In this note, we consider the
minimum constant $\alpha$ such that
any digraph on $n$ vertices with minimum outdegree at least $\alpha
n$ contains a directed cycle of length at most 4. The
conjecture is that $\alpha=1/4$. Applying the combinatorial techniques
in~\cite{b97, lb83, s98}, we prove the following result.

\begin{theorem}\label{thm1.2}
If $\alpha \geq 0.28866$, then any digraph on $n$ vertices with
minimum outdegree at least $\alpha n$ contains a directed cycle of
length at most $4$.
\end{theorem}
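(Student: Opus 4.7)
I would argue by contradiction: assume $G$ is a digraph on $n$ vertices with minimum out-degree $\delta^+(G) \geq \alpha n$ and no directed cycle of length at most $4$, and aim to deduce an inequality of the shape $12\alpha^2 \leq 1 + o(1)$, which contradicts $\alpha \geq 0.28866 > 1/\sqrt{12} = \sqrt{3}/6$.

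The first step is purely structural. For every vertex $v$, write $N^+(v), N^-(v)$ for the out- and in-neighbourhoods, and for $k \geq 1$ let $N_k(v)$ denote the set of vertices whose shortest directed path from $v$ has length exactly $k$. Absence of any directed cycle of length at most $4$ forces $N^-(v)$ to be disjoint from $\{v\} \cup N_1(v) \cup N_2(v) \cup N_3(v)$, since a common vertex would close into a directed cycle of length $1$, $2$, $3$, or $4$. Thus
\[
d^-(v) + |N_1(v)| + |N_2(v)| + |N_3(v)| \leq n - 1,
\]
with $|N_1(v)| = d^+(v) \geq \alpha n$.

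Next I would turn the degree hypothesis into lower bounds on the combined neighbourhoods $|N_1(v) \cup N_2(v)|$ and $|N_1(v) \cup N_2(v) \cup N_3(v)|$ via path counting. The number of directed $2$-paths leaving $v$ is $\sum_{u \in N^+(v)} d^+(u) \geq (\alpha n)^2$, and each lands in $N_1(v) \cup N_2(v)$; since the multiplicity of $2$-paths from $v$ to a fixed $w$ is $|N^+(v) \cap N^-(w)|$, a Cauchy--Schwarz estimate on the in-degree sequence (combined with the bound $|N^+(v) \cap N^-(w)| \leq \min(d^+(v), d^-(w))$) extracts a bound of the form $|N_1(v) \cup N_2(v)| \geq g(\alpha)\,n$. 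An analogous estimate based on the $\geq (\alpha n)^3$ directed $3$-paths yields the corresponding bound for $N_1 \cup N_2 \cup N_3$. Substituting into the structural inequality gives an upper bound on $d^-(v)$ for each $v$; summing against the identity $\sum_v d^-(v) = \sum_v d^+(v) \geq \alpha n^2$ and applying convexity of $x \mapsto x^2$ on the degree sequence should close the argument.

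The main obstacle I expect is making the path-counting estimates sharp enough to recover the constant $\sqrt{3}/6$: a naive count loses a constant factor because many $2$-paths can collide at the same terminus. The critical trick is to use the no-$4$-cycle hypothesis a second time to control this multiplicity---if $|N^+(v) \cap N^-(w)|$ were large for many $w$, the resulting bipartite overlap structure between out-neighbourhoods of vertices in $N^+(v)$ would itself force a short directed cycle. Packaging this second use of the hypothesis, together with careful bookkeeping of the lower-order terms (arising from the ``$-1$'' in the structural inequality and from boundary cases where $d^+(v)$ is close to $\alpha n$), is where the real work lies.
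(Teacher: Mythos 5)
There is a genuine gap here: what you have written is the easy, purely structural half of an argument, while every quantitatively decisive step is deferred. The inequality $d^-(v)+|N_1(v)|+|N_2(v)|+|N_3(v)|\leq n-1$ is correct, but the whole difficulty lies in lower-bounding $|N_2(v)|$ and $|N_3(v)|$, and you never produce the functions $g(\alpha)$ nor derive the target inequality $12\alpha^2\leq 1$ from anything --- that bound appears to be reverse-engineered from the constant rather than obtained. Worse, the ``critical trick'' you invoke to control the multiplicity $|N^+(v)\cap N^-(w)|$ does not work as stated: if many vertices of $N^+(v)$ share the out-neighbour $w$, all one gets is a large collection of internally disjoint directed $2$-paths from $v$ to $w$ and a transitive-triangle-like overlap structure, and no arc configuration among $v$, $N^+(v)$ and $w$ of that kind closes into a directed cycle of length at most $4$. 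So the second use of the hypothesis, which you correctly identify as the heart of the matter, is not actually available in the form you describe.

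The paper exploits the hypothesis a second (and third) time in a quite different way, via induction on $n$: since the theorem is assumed for smaller digraphs, the induced subdigraphs on $N^+(v)$, on $N^+(u)\cap N^+(v)$, and on $N^+(v)\cup N^+(w)$ must each contain a vertex of internal outdegree less than $\alpha$ times the order of that subdigraph, which forces $(1-\alpha)$-fractions of fresh out-neighbours outside. This yields, for every arc $(u,v)$, the inequality $n> r+\deg^-(v)+q(u,v)+(1-\alpha)r+(1-\alpha)^2t(u,v)$ with $r=\lceil\alpha n\rceil$, where $q(u,v)$ counts induced $2$-paths ending in $(u,v)$ and $t(u,v)$ counts transitive triangles with base $(u,v)$. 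Summing over all arcs, using Cauchy--Schwarz on $\sum_v(\deg^-(v))^2$, the identity $\sum p(u,v)=\sum q(u,v)$, and the bound $t\leq n\binom{r}{2}$ on the total number of transitive triangles, gives the cubic inequality $\alpha^3-4\alpha^2+8\alpha-2<0$, hence $\alpha<0.28865$, a contradiction. If you want to pursue your neighbourhood-expansion route, you would at minimum need to import this inductive device (or some substitute) to get usable expansion constants, and then carry out the path-counting estimates explicitly; as it stands the proposal does not constitute a proof.
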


\section{Proof of Theorem~\ref{thm1.2}}

We prove Theorem~\ref{thm1.2} by induction on $n\geq 3$. The theorem
holds for $n\leq 4$ clearly. Now assume that the theorem holds for
all digraphs with fewer than $n$ vertices. Let $G$ be a digraph on $n$ vertices with minimum
outdegree at least $\alpha n$. Suppose $G$ contains no directed
cycles with length at most 4. We can, without loss of generality,
suppose that $G$ is $r$-outregular, where $r=\lceil \alpha n
\rceil$, that is, every vertex is of the outdegree $r$ in $G$. We
will try to deduce a contradiction. First we present some
notations following~\cite{s98}.

For any $v\in V(G)$,
let

$N^+(v)=\{u\in V(G):(v,u)\in E(G)\}$, and $deg^+(v)=|N^+(v)|$, the
outdegree of $v$;

$N^-(v)=\{u\in V(G):(u,v)\in E(G)\}$, and $deg^-(v)=|N^-(v)|$, the
indegree of $v$.

We say $\langle u,v,w\rangle$ a {\it transitive triangle} if $(u,v),
(v,w), (u,w)\in E(G)$. The arc $(u,v)$ is called the base of the
transitive triangle.

For any $(u,v)\in E(G)$, let

$P(u,v)=N^+(v)\backslash N^+(u)$, and $p(u,v)=\abs {N^+(v)\backslash
N^+(u)}$, the number of induced 2-path with the first arc $(u,v)$;

$Q(u,v)=N^-(u)\backslash N^-(v)$, and $q(u,v)=\abs {N^-(u)\backslash
N^-(v)}$, the number of induced 2-path with the last arc $(u,v)$;

$T(u,v)=N^+(u)\cap N^+(v)$, and $t(u,v)=\abs {N^+(u)\cap N^+(v)}$,
the number of transitive triangles with base $(u,v)$.

\begin{lemma}
For any $(u,v)\in E(G)$,
 \begin{equation}\label{e2.1}
  n> r+deg^-(v)+q(u,v)+(1-\alpha)r+(1-\alpha)^2t(u,v).
  \end{equation}
\end{lemma}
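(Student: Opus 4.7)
The plan is to exhibit pairwise disjoint subsets of $V(G)$ whose sizes force a lower bound on $n$, combining disjointness arguments from the absence of short cycles with two applications of the inductive hypothesis. First I would verify that the four sets $\{v\}$, $N^+(v)$, $N^-(v)$ and $Q(u,v)$ are pairwise disjoint: absence of loops and 2-cycles handles $\{v\}$ and $N^+(v)\cap N^-(v)$, the definition of $Q(u,v)$ gives $N^-(v)\cap Q(u,v)=\emptyset$, and any $w\in N^+(v)\cap Q(u,v)\subseteq N^+(v)\cap N^-(u)$ would yield the forbidden 3-cycle $u\to v\to w\to u$. Setting $W:=V(G)\setminus(\{v\}\cup N^+(v)\cup N^-(v)\cup Q(u,v))$ then gives $\abs{W}=n-1-r-deg^-(v)-q(u,v)$, so the lemma reduces to proving $\abs{W}\geq(1-\alpha)r+(1-\alpha)^2 t(u,v)$.

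The next ingredient is the containment, for each $w\in N^+(v)$, $N^+(w)\subseteq N^+(v)\cup W$. This uses all three forbidden cycle lengths: $v\notin N^+(w)$ (no 2-cycle), $N^+(w)\cap N^-(v)=\emptyset$ (else $v\to w\to x\to v$ is a 3-cycle), and $N^+(w)\cap Q(u,v)\subseteq N^+(w)\cap N^-(u)=\emptyset$ (else $u\to v\to w\to x\to u$ is a 4-cycle). Consequently $\abs{N^+(w)\cap W}=r-\abs{N^+(w)\cap N^+(v)}$ for every $w\in N^+(v)$. The first bonus term is now immediate: apply the inductive hypothesis to $G[N^+(v)]$, an $r$-vertex sub-digraph inheriting the absence of short cycles; its minimum out-degree must be strictly less than $\alpha r$, so some $w_0\in N^+(v)$ satisfies $\abs{N^+(w_0)\cap N^+(v)}<\alpha r$, yielding $\abs{N^+(w_0)\cap W}>(1-\alpha)r$.

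Extracting the second bonus term $(1-\alpha)^2 t(u,v)$ is the technical heart of the argument and I expect this to be the main obstacle. The factor $(1-\alpha)^2$ strongly suggests a nested application of induction: applying the inductive hypothesis to $G[T(u,v)]$, a $t(u,v)$-vertex short-cycle-free sub-digraph, produces some $w_1\in T(u,v)$ with $\abs{N^+(w_1)\cap T(u,v)}<\alpha t(u,v)$; one then has to argue that $N^+(w_1)$ contributes at least $(1-\alpha)^2 t(u,v)$ vertices to $W$ that are \emph{not already} in $N^+(w_0)\cap W$. The delicate part is this disjointness: a natural route is to bound $\abs{N^+(w_1)\cap N^+(w_0)}$ by intersecting with the $(1-\alpha)r$-density complement of $N^+(w_0)$ inside $N^+(v)$, so that the two factors of $(1-\alpha)$ combine. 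Alternatively, a weighted double counting over $T(u,v)$ (with each $w\in T(u,v)$ supplying $\abs{N^+(w)\cap W}=r-\abs{N^+(w)\cap N^+(v)}$), followed by invoking the $G[T(u,v)]$-induction once more, should deliver the combined bound directly. In either form the principal difficulty is keeping the bookkeeping clean while two strict inequalities from nested induction feed into the final estimate.
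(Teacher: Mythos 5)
Your setup is sound and matches the paper's in spirit: the pairwise disjointness of $N^+(v)$, $N^-(v)$, $Q(u,v)$ (and the sets you add to them), and the first application of the inductive hypothesis to $G[N^+(v)]$ giving a vertex $w_0$ with $\abs{N^+(w_0)\cap W}>(1-\alpha)r$, are exactly right and suffice for the case $t(u,v)=0$. But the part you yourself flag as ``the technical heart'' is genuinely missing, and neither of the two routes you sketch works as stated. Your first route requires controlling the overlap between $N^+(w_0)\cap W$ and $N^+(w_1)\cap W$ for two \emph{independently} chosen low-outdegree vertices $w_0\in N^+(v)$ and $w_1\in T(u,v)$; nothing in the hypotheses prevents $w_0$ and $w_1$ from having nearly identical outneighborhoods (or from being the same vertex), in which case the union of their contributions to $W$ is only about $(1-\alpha)r$ and the $(1-\alpha)^2t(u,v)$ term is lost. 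Your second route, a double count of $\sum_{w\in T(u,v)}\abs{N^+(w)\cap W}$, gives no bound on $\abs{W}$ without controlling the multiplicities $\abs{N^-(y)\cap T(u,v)}$ for $y\in W$, which you have no handle on.

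The paper's resolution is a different, sequential device that makes the disjointness automatic rather than something to be proved. First it applies induction to $G[T(u,v)]$ alone, obtaining $w\in T(u,v)$ with $\abs{N^+(w)\setminus N^+(v)}\geq r-p(u,v)-\alpha t(u,v)=(1-\alpha)t(u,v)$. Then -- and this is the key step you are missing -- it applies the inductive hypothesis not to $N^+(v)$ but to the \emph{union} $N^+(v)\cup N^+(w)$, producing $x$ with $\abs{N^+(x)\setminus(N^+(v)\cup N^+(w))}\geq r-\alpha\abs{N^+(v)\cup N^+(w)}=(1-\alpha)r-\alpha\abs{N^+(w)\setminus N^+(v)}$. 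The two contributions $N^+(w)\setminus N^+(v)$ and $N^+(x)\setminus(N^+(v)\cup N^+(w))$ are disjoint by construction, and adding them gives $(1-\alpha)r+(1-\alpha)\abs{N^+(w)\setminus N^+(v)}\geq(1-\alpha)r+(1-\alpha)^2t(u,v)$: one factor of $(1-\alpha)$ comes from the induction on $T(u,v)$, the other from the coefficient $-\alpha$ in the union estimate. Note also that the paper's decomposition of the bound is not ``$(1-\alpha)r$ from $N^+(v)$ plus $(1-\alpha)^2t(u,v)$ from $T(u,v)$'' as in your plan, but rather ``$(1-\alpha)t(u,v)$ first, then $(1-\alpha)r-\alpha(1-\alpha)t(u,v)$,'' which telescopes to the stated right-hand side. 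Without this union trick (or an equivalent substitute) your argument does not close.
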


\begin{proof} If $t(u,v)=0$, then the inequality (\ref{e2.1}) is
\begin{equation}\label{e2.2}
 n> r+deg^-(v)+q(u,v)+(1-\alpha) r.
 \end{equation}

There exists some $w\in N^+(v)$ with outdegree less than $\alpha r$
in the subdigraph of $G$ induced by $N^+(v)$ (Otherwise, this
subdigraph would contain a directed 4-cycle by the inductive
hypothesis). Thus $\abs {N^+(w)\backslash N^+(v)}\geq r-\alpha r$.
It is easy to see that four subsets $N^+(v)$, $N^+(w)\backslash
N^+(v)$, $N^-(v)$ and $N^-(u)\backslash N^-(v)$ are
pairwise-disjoint. It follow that
 $$
 \begin{array}{rl}
 n &>|N^+(v)|+|N^-(v)|+|N^-(u)\backslash N^-(v)|+|N^+(w)\backslash N^+(v)|\\
 &\geq r+deg^-(v)+q(u,v)+(1-\alpha) r.
 \end{array}
 $$
Thus, the inequality (\ref{e2.2}) holds for $t(u,v)=0$.

We now assume $t(u,v)>0$. By the inductive hypothesis, some vertex
$w\in N^+(u)\cap N^+(v)$ has outdegree less than $\alpha t(u,v)$ in
the subdigraph of $G$ induced by $N^+(u)\cap N^+(v)$, otherwise,
this subdigraph would contain a directed 4-cycle. Also, $w$ has not
more than $p(u,v)$ outneighbors in the subdigraph of $G$ induced by
$N^+(v)\backslash N^+(u)$. Let $N^+(w)\backslash N^+(v)$ be the
outneighbors of $w$ which is not in $N^+(v)$. Noting that
$t(u,v)=r-p(u,v)$, we have that
 \begin{equation}\label{e2.3}
 \abs {N^+(w)\backslash N^+(v)}\geq r-p(u,v)-\alpha
 t(u,v)=(1-\alpha)t(u,v).
 \end{equation}

Because $G$ has no directed triangle, these vertices are neither in
$N^-(v)$ nor in $N^-(u)\backslash N^-(v)$. Consider the subdigraph
of $G$ induced by $N^+(v)\cup N^+(w)$, by the inductive hypothesis,
some vertex $x\in N^+(v)\cup N^+(w)$ has outdegree less than $\alpha
\abs {N^+(v)\cup N^+(w)}$ in this subdigraph. Thus, the set of
outneighbors of $x$ not in $N^+(v)\cup N^+(w)$ satisfies
 $$
 \begin{array}{ll}
 \abs {N^+(x)\backslash (N^+(v)\cup N^+(w))}& \geq r-\alpha \abs {N^+(v)\cup N^+(w)}\\
                                       &=r-\alpha(\abs {N^+(v)}+\abs {N^+(w)\backslash N^+(v)})\\
                                       &=(1-\alpha)r-\alpha \abs {N^+(w)\backslash N^+(v)},
\end{array}
 $$
that is
\begin{equation}\label{e2.4}
  \abs {N^+(x)\backslash (N^+(v)\cup N^+(w))}\geq (1-\alpha)r-\alpha \abs {N^+(w)\backslash N^+(v)}.
\end{equation}

Since $G$ has no directed 4-cycle, these vertices are neither in
$N^-(v)$ nor in $N^-(u)\backslash N^-(v)$. Then because $N^+(v)$,
$N^+(w)\backslash N^+(v)$, $N^+(x)\backslash (N^+(v)\cup N^+(w))$,
$N^-(v)$ and $N^-(u)\backslash N^-(v)$ are pairwise-disjoint sets of
cardinalities $r$, $\abs {N^+(w)\backslash N^+(v)}$, $\abs
{N^+(x)\backslash (N^+(v)\cup N^+(w))}$, $deg^-(v)$ and $q(u,v)$, we
have that
 \begin{equation}\label{e2.5}
 n>r+\abs {N^+(w)\backslash N^+(v)}+\abs {N^+(x)\backslash (N^+(v)\cup N^+(w))}+deg^-(v)+q(u,v).
 \end{equation}

Substituting inequalities (\ref{e2.3}) and (\ref{e2.4}) into
(\ref{e2.5}) yields
 $$
 \begin{array}{ll}
 n& >r+\abs {N^+(w)\backslash N^+(v)}+(1-\alpha)r-\alpha \abs {N^+(w)\backslash N^+(v)}+deg^-(v)+q(u,v)\\
   &=r+(1-\alpha)r+(1-\alpha)\abs {N^+(w)\backslash N^+(v)}+deg^-(v)+q(u,v)\\
   &\geq  r+deg^-(v)+q(u,v)+(1-\alpha)r+(1-\alpha)^2t(u,v).
 \end{array}
 $$

The lemma follows.
\end{proof}

We now prove Theorem~\ref{thm1.2}. Recalling that $t(u,v)=r-p(u,v)$,
we can rewrite the inequality (\ref{e2.1}) as
 \begin{equation}\label{e2.6}
 (2\alpha-\alpha^2)t(u,v)>(3-\alpha)r-n+deg^-(v)+q(u,v)-p(u,v).
 \end{equation}

Summing over all $(u,v)\in E(G)$, we have that
 \begin{equation}\label{e2.7}
 \sum\limits_{(u,v)\in E(G)} t(u,v)=t,
 \end{equation}
where $t$ is the number of transitive triangles in $G$, and
 \begin{equation}\label{e2.8}
 \sum\limits_{(u,v)\in E(G)} (3-\alpha)r-n=nr[(3-\alpha)r-n].
 \end{equation}
By Cauchy's inequality and the first theorem on graph theory (see,
for example, Theorem 1.1 in~\cite{x03}), we have that
 $$
 \sum\limits_{(u,v)\in E(G)} deg^-(v)=\sum\limits_{v\in V(G)}
 (deg^-(v))^2\geq \frac{1}{n}\left(\sum\limits_{v\in V(G)}
 deg^-(v)\right)^2=nr^2,
 $$
that is
\begin{equation}\label{e2.9}
 \sum\limits_{(u,v)\in E(G)} deg^-(v)\geq nr^2.
 \end{equation}

Because $\sum\limits_{(u,v)\in E(G)} p(u,v)$ and $\sum\limits_{(u,v)\in E(G)} q(u,v)$
are both equal to the number of induced directed 2-paths in $G$, it follows that
 \begin{equation}\label{e2.10}
 \sum\limits_{(u,v)\in E(G)} p(u,v)=\sum\limits_{(u,v)\in E(G)} q(u,v).
 \end{equation}

Summing over all $(u,v)\in E(G)$ for the inequality (\ref{e2.6}) and
substituting inequalities (\ref{e2.7}) $\sim$ (\ref{e2.10}) into
that inequality yields,
 \begin{equation}\label{e2.11}
 (2\alpha-\alpha^2)t>(4-\alpha)nr^2-n^2r.
 \end{equation}

Noting that $t\leq n{r\choose 2}$ (see Shen~\cite{s98}), we have
that
  \begin{equation}\label{e2.12}
 t(2\alpha-\alpha^2)\leq
 nC_r^2(2\alpha-\alpha^2)<\frac{nr^2}{2}(2\alpha-\alpha^2).
 \end{equation}
Combining (\ref{e2.11}) and (\ref{e2.12}) yields
 \begin{equation}\label{e2.13}
 (4-\alpha)nr^2-n^2r<\frac{nr^2}{2}(2\alpha-\alpha^2).
 \end{equation}

Dividing both sides of the inequality (\ref{e2.13}) by
$\frac{nr^2}{2}$, and noting that $r=\lceil \alpha n \rceil$, we get
$$2(4-\alpha)-\frac{2}{\alpha}<(2\alpha-\alpha^2),$$
that is
$$\alpha^3-4\alpha^2+8\alpha-2<0.$$

We obtain that $\alpha<0.28865$, a contradiction. This completes the
proof of the theorem.

\end{document}